\newtheorem{theorem}{Theorem}[section]
\newtheorem{lemma}[theorem]{Lemma}
\newtheorem{problem}{Problem}
\newtheorem{remark}{Remark}
\newtheorem{assumption}{Assumption}
\newcommand{\mc}{\mathcal}
\newcommand{\real}{\mathbb{R}}
\newcommand*{\QEDB}{\hfill\ensuremath{\square}}
\newcommand*{\QEDBL}{\hfill\ensuremath{\blacksquare}}
\newcommand\oprocendsymbol{\hbox{$\square$}}
\newcommand\oprocend{\relax\ifmmode\else\unskip\hfill%
\fi\oprocendsymbol}
\newcommand{\obj}{\tilde{\Phi}}
\title{\LARGE \bf
Two-point Random Gradient-free Methods for \\
Model-free Feedback Optimization}
\author{Amir Mehrnoosh and Gianluca Bianchin\thanks{
The authors are with ICTEAM Institute and the Department of Mathematical Engineering at UCLouvain, Belgium. A. Mehrnoosh is supported by F.R.S.-FNRS.
\{\href{mailto:amir.mehrnoosh@uclouvain.be}{\texttt{amir.mehrnoosh}},
    \href{mailto:gianluca.bianchin@uclouvain.be}{\texttt{gianluca.bianchin\}@uclouvain.be}.}
}}
\begin{document}

\maketitle
\thispagestyle{empty}
\pagestyle{empty}

\begin{abstract}

Feedback optimization has emerged as a promising approach for 
optimizing the steady-state operation of dynamical systems 
while requiring minimal modeling efforts. Unfortunately, most 
existing feedback optimization methods rely on 
knowledge of the plant dynamics, which may be difficult to obtain or estimate in practice. In this paper, we introduce a novel randomized two-point gradient-free feedback optimization method, inspired by zeroth-order optimization techniques. Our method relies on function evaluations at two points to estimate the gradient and update the control input in real-time. We provide convergence guarantees and 
show that our method is capable of computing an $\epsilon$-stationary point for smooth, nonconvex functions at a rate $\mathcal{O}(\epsilon^{-1})$, in line with existing results for two-point gradient-free methods for static optimization. Simulation results validate the findings. 

\smallskip
\textit{Index Terms} -- Model-free control, zeroth-order optimization, feedback optimization.
\end{abstract}

\section{Introduction}

Feedback optimization (FO) is concerned with the problem of controlling dynamical systems to an optimal steady-state point, as characterized by a 
mathematical optimization problem~\cite{AJ-ML-PB:09}. Examples of the applicability of this framework include optimal scheduling in communication networks, resource scheduling in power grids, optimization of transportation systems, and operation of industrial control processes. Traditional numerical optimization methods~\cite{nesterov2018lectures} provide a systematic approach to make control decisions when an exact model of the plant to control is available, and led to a rich class of model-based FO methods~\cite{AJ-ML-PB:09,FB-HD-CE:12,MC-ED-AB:20,GB-JC-JP-ED:21-tcns,GB-JIP-ED:20-automatica,GB-DL-MD-SB-JL-FD:21,AH-SB-GH-FD:20}.
Yet, in real-world applications, an accurate model
of the plant to control is rarely available~\cite{bianchin2023online}, and thus implementing and ensuring the optimality of these methods remains challenging. 
To overcome these limitations, a model-free approach for FO has recently been proposed 
in~\cite{he2023model}, relying on a one-point 
residual-feedback gradient estimate~\cite{zhang2022new}. 
Unfortunately, as is well-known in the optimization literature (see~\cite{zhang2022new} for an insightful comparison), methods relying on one-point gradient estimates are unable to recover the rate of convergence of methods based on knowledge of the exact gradient~\cite{zhang2022new}.
Motivated by these limitations, in this paper, we propose a 
two-point random gradient-free method for feedback 
optimization. The proposed controller relies on two 
function evaluations of the plant performance to estimate a 
descent direction, combined with a random exploration step 
for the control input. To incorporate two function evaluations at each iteration, the controller is designed to operate at a slower timescale than that of the plant, in line with existing approaches on FO~\cite{GB-JC-JP-ED:21-tcns,AH-SB-GH-FD:20,GB-JIP-ED:20-automatica}.
We show that, for smooth nonconvex problems, the proposed 
two-point method computes an $\epsilon$-stationary point in
$\mathcal{O}(\epsilon^{-1})$ iterations, this outperforms existing single-point methods~\cite{he2023model}, which are characterized by a rate of $\mathcal{O}(\epsilon^{-3/2})$
for the same problem.

\textit{Related works.} Feedback optimization controllers have attracted significant interest due to their capacity to steer systems toward optimal steady states while effectively rejecting both constant and time-varying disturbances~\cite{AH-SB-GH-FD:20,GB-JC-JP-ED:21-tcns,GB-JIP-ED:20-automatica}. These approaches integrate numerical optimization into feedback control by leveraging real-time measurements to estimate gradients, thereby removing the reliance on accurate models of the plant and disturbances. In~\cite{chen2020model}, a fast-stable plant is considered as an algebraic steady-state map for the power flow application.
Particularly related to this work is the recent 
work~\cite{he2023model}, which is the first fully model-free 
method for feedback optimization, relying on a one-point 
residual-feedback gradient estimate~\cite{zhang2022new}.

Another key development in FO that emerged to address scalability and privacy issues in large-scale systems is distributed FO. When centralized approaches become impractical because of the system size or the need to keep internal states private, decentralized methods provide a viable alternative. Distributed gradient descent was first introduced in~\cite{nedic2009distributed} and further analyzed in~\cite{yuan2016convergence}, with other distributed optimization algorithms extensively explored in~\cite{yang2019survey}. Building on these foundations, the work~\cite{mehrnoosh2024distributed} and~\cite{carnevale2024nonconvex} propose distributed FO methods that integrate FO with distributed computation to make FO practical for distributed architectures.

The literature on (static) zeroth-order optimization \cite{liu2020primer} is also related to this work. 
Zeroth-order methods refer to a class of optimization techniques that estimate gradients using function evaluations instead of explicit sensitivity information. These methods, 
such as one-point feedback~\cite{flaxman2004online} and an improved version~\cite{chen2022improve}, one-point residual feedback~\cite{zhang2022new}, and two-point 
feedback~\cite{YN-VP:17, duchi2015optimal,AA-OD-LX:10,grapiglia2024worst} include randomized gradient-free techniques. They have demonstrated convergence properties comparable to first-order approaches that make them suitable for model-free optimization. Specifically, two-point gradient-free methods that estimate gradients using finite differences between function evaluations at two distinct points are of particular interest because of their better rate of convergence relative to other gradient-free methods. However, this comes at the cost of increased computational complexity, as they require two function evaluations per estimation.

\textit{Contributions.}
This work features two main contributions. First, 
we propose a two-point random gradient-free method for 
feedback optimization. In a net departure 
from~\cite{he2023model}, we utilize a two-point gradient 
estimate rather than a single-point one.  
Second, we provide a rigorous convergence analysis of our 
approach. Although our method requires two function 
evaluations to estimate a descent direction, its rate of 
convergence outperforms existing approaches based on a
single evaluation. 
Importantly, our analysis shows that the 
rate of convergence of our method recovers that of established two-point gradient methods in static optimization~\cite{YN-VP:17}.

\textit{Organization.}
The paper is structured as follows. Section~\ref{sec:problem_formulation} defines the problem focus of this work. Section~\ref{sec:control_design} introduces the proposed two-point random gradient-free controller. Section~\ref{sec:analysis} presents the main theoretical results, followed by numerical validation in Section~\ref{sec:simulations}. Finally, Section~\ref{sec:conclusions} concludes the paper.

\section{Problem formulation}

\label{sec:problem_formulation}

We consider plants that can be modeled by a discrete-time dynamical models of the form:
\begin{align} \label{eq:system}
    x_{t+1} &= f(x_t,u_t,d), \nonumber \\
    y_t &= z(x_t,d),
\end{align}
where $x_t \in \real^n$ is the system state at time $t$, $u_t \in \real^p$ the 
control input, $y_t \in \real^q$ the measured output, and $d \in \real^r$ models 
a deterministic but unknown constant disturbance. In this work, we are 
interested in an output regulation control problem; for this to be well-posed, 
we make the following assumption.

\begin{assumption}[\textbf{\textit{Properties of the plant}}]
\label{assum:system}
There exists a unique map $x_{\text{ss}}: \mathbb{R}^{p}\times \mathbb{R}^{r} \to \mathbb{R}^{n}$ such that $\forall u,d, f(x_{\textrm{ss}}(u,d),u,d)=x_{\textrm{ss}}(u,d)$; the mapping $u \mapsto x_{\textrm{ss}}(u,d)$ is globally $M_x$-Lipschitz in $u$, and the function $z(x,d)$ is globally $M_z$-Lipschitz in $x$. 
Moreover, for each $(u,d)$, the equilibrium point $x_{\text{ss}}(u,d)$ 
of~\eqref{eq:system} is globally exponential stable; that is, there exists 
$\beta,\tau > 0$ such that for any $x_0 \in \mathbb{R}^n$, the solutions 
of~\eqref{eq:system} with $u_t=u \ \forall t$ satisfy 
$\|x_t - x_{\text{ss}}(u,d)\| \leq \beta\|x_0 - x_{\textrm{ss}}(u,d)\|e^{-\tau t}$.
\QEDB\end{assumption}

In other words, Assumption~\ref{assum:system} guarantees the existence of a function
$x_{\text{ss}}$ that maps each input pair $(u,d)$ into the corresponding 
equilibrium state and that the equilibrium of~\eqref{eq:system} is globally 
exponentially stable. We note that these assumptions are common in output 
regulation problems~\cite{ED:76} as well as in feedback 
optimization approaches~\cite{GB-JC-JP-ED:21-tcns,AH-SB-GH-FD:20,carnevale2024nonconvex}.
Notice also that if the plant to be regulated is not asymptotically stable, 
\eqref{eq:system} shall be viewed as a pre-stabilized version of such a plant 
(e.g., stabilized via classical approaches based on output 
feedback~\cite{HKK:96}). In what follows, we let
\begin{equation}\label{eq:ssMap}
    h(u,d) \triangleq  z(x_{\text{ss}}(u,d),d).
\end{equation}

It follows from Assumption~\ref{assum:system} that~\eqref{eq:system} is globally input-to-state stable with 
respect to the input $(u_{t+1}-u_t)$ (see~\cite{ZPJ-YW:01}); 
that is, there exists\footnote{See~\cite{ZPJ-YW:01} for notation.} a $\mathcal{K} \mathcal{L}$-function 
$\beta: \mathbb{R}_{\geq 0} \times \mathbb{R}_{\geq 0} \longrightarrow \mathbb{R}$ and a $\mathcal{K}$-function 
$\gamma$ such that, for each input signal $u_t$ and each 
$x_0 \in \mathbb{R}^n$, it holds that
\begin{align}\label{eq:iss}
\Vert x_{t+1} - x_{\text{ss}}(u_t,d)\Vert \leq 
\max _{1 \leq j \leq t} & \{\sigma_1(\Vert x_{1} - x_{\text{ss}}(u_0,d)\Vert), \\
&\quad\quad\quad\quad\quad \sigma_2(\Vert u_j - u_{j-1}\Vert)\},\nonumber
\end{align}
for all $x_1\in\real^n,$ where $\sigma_1, \sigma_2$ are  two 
$\mathcal{K}$-functions.
Motivated by this, we make the following assumption.

\begin{assumption}[\textbf{\textit{Properties of $h(u,d)$}}]
\label{assum:funUtoY}
    There exists a $\mu \geq 0$ such that for any input $u_t \in \real^p$,
    \begin{align*}
        \Vert y_{t+1} - h(u_t,d) \Vert^2 \leq \mu.
    \end{align*}
    \QEDB
\end{assumption}

The quantity $\mu$ can be interpreted as an estimate for the 
speed of the dynamics of the plant~\eqref{eq:system}, capturing the rate at 
which~\eqref{eq:system} converges to its steady-state output. 
By comparison with~\eqref{eq:iss}, $\mu$  can be viewed
as an estimate for the right-hand side of~\eqref{eq:iss} 
(combined with the Lipschitz constant $M_z$). Notice that a 
uniform bound for the right-hand side of~\eqref{eq:iss} is a 
reasonable approximation when $\Vert x_{1} - x_{\text{ss}}(u_0,d)\Vert$ is bounded (in other words, the initial 
condition of~\eqref{eq:system} is close to the plant's 
steady-state), and the controller is sufficiently-slow, so 
that $\Vert u_j - u_{j-1}\Vert$  is bounded.
We leave a relaxation of this assumption as the scope of 
future works.

In this work, we study the problem of designing a controller that 
regulates~\eqref{eq:system} to the solution of the following optimal output 
regulation problem:
\begin{align} \label{eq:trueProblem}
    \begin{split}
        \min_{u,y} ~ &\Phi(u,y) \\
        \textrm{s.t.} ~ &y = h(u,d),
    \end{split}
\end{align}
where $\Phi: \real^p \times \real^q \to \real$ is a (possibly nonconvex) loss 
function modeling performance requirements for system inputs and outputs at 
steady-state. By substituting the constraint into the cost, 
\eqref{eq:trueProblem} can be rewritten as an unconstrained optimization problem:
\begin{align} \label{eq:unconProblem}
    \min_u ~ \tilde{\Phi}(u) \triangleq \Phi(u,h(u,d)).
\end{align}
We make the following assumption on the loss functions.

\begin{assumption}[\textbf{\textit{Properties of the optimization}}]
\label{assum:costs}
The function $\obj(u)$ is globally $L$-smooth, $M$-Lipschitz, and is bounded below by $\obj_{\text{low}}$. Moreover, the function $\Phi(u,y)$ is globally $M_\Phi$-Lipschitz in $y$.
        \QEDB
\end{assumption}

The assumptions on Lipschitz continuity in Assumption~\ref{assum:costs} are 
standard and largely satisfied in applications.

A standard approach to regulate~\eqref{eq:system} to the solution 
of~\eqref{eq:unconProblem} is as follows:
\begin{enumerate}[label=(S\arabic*), leftmargin=*]
\item Apply an optimization algorithm to determine an optimizer $u^\star$ 
of~\eqref{eq:unconProblem}    
\item Apply $u_t \equiv u^\star$ to~\eqref{eq:system}
\end{enumerate}
Unfortunately, such an approach is impractical because of two main limitations: 
\begin{enumerate}[label=(L\arabic*), leftmargin=*]
\item Solving~\eqref{eq:unconProblem} requires knowledge of $d,$ which in many 
practical applications is unknown
\item Solving~\eqref{eq:unconProblem} requires knowledge of the mapping $h(u,d)$ 
and hence of the full model of the plant~\eqref{eq:system} (precisely, of the 
functions $f(x,u,d),$ $z(x,d)$, and $x_{\text{ss}}(u,d)$ implicitly
through~\eqref{eq:ssMap}), which is impractical in many cases, as models are 
often unknown or inexact
\end{enumerate}

Motivated by these observations, in this work we study the following problem.

\begin{problem}\label{prob:main_objective}
Design a control algorithm for~\eqref{eq:system}, having access only to 
performance evaluations of the plant at each time (i.e., oracle evaluations of 
$t \mapsto \Phi(u_t,y_{t+1})$) and without any knowledge of the disturbance $d$ 
nor of the model of~\eqref{eq:system}, such that the inputs and outputs 
of~\eqref{eq:system} converge asymptotically to an optimizer of~\eqref{eq:trueProblem}.
\QEDB\end{problem}
\smallskip

We stress that we seek an algorithm that relies on function evaluations of the 
map $\Phi(u,y)$ (cf.~\eqref{eq:trueProblem}) rather than $\tilde \Phi(u)$ 
(cf.~\eqref{eq:unconProblem}) because the mapping $h(u,d)$ is assumed to be 
unknown (see limitation(L2)), thus seeking a method that is entirely 
`model-free.'

\section{The two-point Random Gradient-free Method for Feedback Optimization}
\label{sec:control_design}

In this section, we propose a two-point random gradient-free method for feedback 
optimization that achieves the objectives set forward in 
Problem~\ref{prob:main_objective}. The method is summarized in 
Algorithm~\ref{alg:2PTRGF}. First, the input $u_t$ is applied to the plant and 
the corresponding control performance is evaluated through $\Phi(u_t, y_{t+1})$ 
(cf. line 2). Then, the control input is randomly perturbed around 
the current point $u_t$ (cf. line 3), applied to the plant, and the control 
performance is re-evaluated at such a perturbed point
$\Phi(u_{t+1}, y_{t+2})$ (cf. line 4). By using these two function 
evaluations, a descent direction for the cost $g_t^\delta$ is estimated (cf. 
line 5) and, finally, the control decision is updated along this direction.
In the algorithm, the parameter $\eta>0$ is interpreted as the stepsize of the 
method and $\delta>0$ as the smoothing parameter modeling the magnitude of the 
perturbation.

\begin{algorithm} \label{alg:2PTRGF}
   \caption{Two-point random gradient-free feedback controller} 
   \begin{algorithmic}[1]
       \State {\bf Data:} $u_0  \in \real^p$, $x_0  \in \real^n$, $t = 0$, $\eta,\delta>0$
       \State Apply $u_t$ to \eqref{eq:system} and evaluate $\Phi(u_t, y_{t+1})$
       \State Set $u_{t+1} = u_t+\delta v_t$, $v_t \sim \mathcal{N}(0,I_p)$
       \State Apply $u_{t+1}$ to \eqref{eq:system} and evaluate $\Phi(u_{t+1}, y_{t+2})$
       \State Set $g_t^{\delta} = \frac{v_t}{\delta} \big( \Phi(u_{t+1},y_{t+2}) - \Phi(u_{t},y_{t+1}) \big)$ \label{line:gradEst}
       \State Set $u_{t+2} = u_t - \eta g_t^\delta$ \label{line:update}
       \State $t\gets t+2$, go to step $2$
   \end{algorithmic}
\end{algorithm}

\begin{remark}
Algorithm~\ref{alg:2PTRGF} can be seen as a variant of the random gradient-free 
two-point optimization method~\cite{YN-VP:17,AA-OD-LX:10,OS-17}, specifically 
adapted for use in FO.
This algorithm is characterized by two main innovative features with respect to~\cite{YN-VP:17,AA-OD-LX:10,OS-17}. First, each function evaluation 
$t \mapsto \Phi(u_t,y_{t+1})$ implicitly requires one state update of the 
plant~\eqref{eq:system} (namely, from $x_t$ to $x_{t+1}$), and thus estimating a 
descent direction (as in line 5 of the algorithm) requires a combination of two 
plant updates. Moreover, because the algorithm relies on function evaluations 
of $\Phi(u,y)$ (in place of $\tilde \Phi(u)$) for the limitations outlined in 
(L1)-(L2), the descent direction estimated by $g_t^\delta$ is an 
\textit{approximation}  of a descent direction for $\tilde \Phi(u).$ These two 
properties pose additional challenges for the closed-loop performance analysis, 
which will be addressed in the subsequent sections.
\QEDB\end{remark}
\smallskip

The closed-loop dynamics, when Algorithm~\ref{alg:2PTRGF} is used to 
control the plant~\eqref{eq:system} are, for\footnote{See the detailed 
description in Algorithm~\ref{alg:2PTRGF} for initializations and the 
special case $t=0.$} $t=1,2,3, \dots$, given by:
\begin{align}\label{eq:closed-loop}
x_{t+1} &= f(x_t,u_t,d), \quad  y_t = z(x_t,d), \\
u_{t+1} &= \begin{cases}
u_t+\delta v_t, \quad v_t \sim \mathcal{N}(0,I_p), & \text{ if } t=2,4,6, \dots,\\
u_{t-1} - \eta \frac{v_{t-1}}{\delta} g_{t-1}^\delta & \text{ if } t=1,3,5, \dots,\\
\end{cases}\nonumber
\end{align}
where $g_{t-1}^\delta = \big( \Phi(u_{t},y_{t+1}) - \Phi(u_{t-1},y_{t}) \big).$
The following result characterizes the control performance of 
Algorithm~\ref{alg:2PTRGF}, when applied as a feedback controller as 
in~\eqref{eq:closed-loop}.

\begin{theorem} \label{thm:objConvergence}
Suppose that Assumptions~\ref{assum:system}-\ref{assum:costs} hold. 
Fix $\epsilon_\Phi>0$, and assume that $\eta<1/8L(p+4)$ and 
$ \delta \leq \sqrt{2\epsilon_\Phi /Lp}$.
Then, the closed-loop system~\eqref{eq:closed-loop}, after $T>0$, iterations 
satisfies\footnote{A function $\psi (n)$ is said to be $\mathcal{O}\big(f(n)\big)$ if there exists a constant $C > 0$ and $n_0$ such that $\forall n \geq n_0$, $\vert \psi(n) \vert \leq C\vert f(n) \vert$.}:
    \begin{align} \label{eq:convBound}
        & \frac{1}{T} \sum_{k=0}^{T-1} E_{v}[\|\nabla \tilde \Phi(u_{k})\|^{2}] 
        = \mathcal{O}\bigg(\frac{1}{T\eta(1-\eta p)}\bigg) + \mathcal{O}\bigg(\frac{\eta \delta^2 p^3}{(1-\eta p)}  \bigg)
        \notag \\
        &  \hspace{2cm} + \mathcal{O}\bigg(\delta^2p^3\bigg) 
        +\mathcal{O}\bigg(\frac{\mu p \eta}{\delta^2 (1-\eta p)}\bigg),
    \end{align}
where $\epsilon_\Phi$ is the precision satisfying $\vert \obj_\delta(u) - \obj(u) \vert \leq \epsilon_\Phi$ and the expectation is with respect to $\{v_0, \dots , v_{T-1}\}$.~
\QEDB\end{theorem}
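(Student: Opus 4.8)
The plan is to follow the descent-lemma template of two-point random gradient-free optimization (as in~\cite{YN-VP:17}), adapted to the two features highlighted in the Remark: the estimate $g_t^\delta$ is formed from evaluations of $\Phi(u,y)$ along the plant's \emph{transient} output, not from the steady-state objective $\tilde\Phi$, and each controller iteration corresponds to two plant updates in~\eqref{eq:closed-loop}.

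First I would set up the Gaussian-smoothing machinery and isolate the model-free error. Define the smoothed objective $\tilde\Phi_\delta(u) \triangleq E_v[\tilde\Phi(u+\delta v)]$, $v\sim\mathcal{N}(0,I_p)$, and recall the standard estimates implied by the $L$-smoothness in Assumption~\ref{assum:costs}: $|\tilde\Phi_\delta(u)-\tilde\Phi(u)|\leq \tfrac{\delta^2}{2}Lp$ --- which is exactly where the hypothesis $\delta\leq\sqrt{2\epsilon_\Phi/Lp}$ and the precision $\epsilon_\Phi$ enter --- together with $\|\nabla\tilde\Phi_\delta(u)-\nabla\tilde\Phi(u)\|\leq \tfrac{\delta}{2}L(p+3)^{3/2}$, the unbiasedness identity $E_{v_t}[\hat g_t^\delta]=\nabla\tilde\Phi_\delta(u_t)$ for the ideal two-point estimator $\hat g_t^\delta \triangleq \tfrac{v_t}{\delta}\big(\tilde\Phi(u_t+\delta v_t)-\tilde\Phi(u_t)\big)$, and the second-moment bound $E_{v_t}[\|\hat g_t^\delta\|^2] = \mathcal{O}\big(p\,\|\nabla\tilde\Phi(u_t)\|^2 + \delta^2 L^2 p^3\big)$. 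Then, writing $g_t^\delta = \hat g_t^\delta + e_t$ with
\[
e_t = \tfrac{v_t}{\delta}\Big[\big(\Phi(u_{t+1},y_{t+2})-\tilde\Phi(u_{t+1})\big) - \big(\Phi(u_t,y_{t+1})-\tilde\Phi(u_t)\big)\Big],
\]
Assumption~\ref{assum:funUtoY} and the $M_\Phi$-Lipschitz continuity of $\Phi$ in $y$ give $\|e_t\|\leq \tfrac{2M_\Phi\sqrt\mu}{\delta}\|v_t\|$, hence $E_{v_t}[\|e_t\|]\leq \tfrac{2M_\Phi\sqrt{\mu p}}{\delta}$ and $E_{v_t}[\|e_t\|^2]\leq \tfrac{4M_\Phi^2\mu p}{\delta^2}$. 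Note that $e_t$ is \emph{not} zero-mean: because $u_{t+1}=u_t+\delta v_t$, the transient output $y_{t+2}$ is correlated with the exploration direction $v_t$, so both conditional moments of $e_t$ must be propagated.

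I would then apply the $L$-smoothness descent inequality for $\tilde\Phi$ along $u_{t+2}=u_t-\eta g_t^\delta$, take $E_{v_t}[\cdot]$, substitute $g_t^\delta=\hat g_t^\delta+e_t$ and $E_{v_t}[\hat g_t^\delta]=\nabla\tilde\Phi_\delta(u_t)$, and control the cross terms $\langle\nabla\tilde\Phi(u_t),\nabla\tilde\Phi_\delta(u_t)-\nabla\tilde\Phi(u_t)\rangle$, $\langle\nabla\tilde\Phi(u_t),E_{v_t}[e_t]\rangle$ via Young's inequality and $\|g_t^\delta\|^2\leq 2\|\hat g_t^\delta\|^2+2\|e_t\|^2$ via the moment bounds. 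After collecting, the net coefficient of $\|\nabla\tilde\Phi(u_t)\|^2$ has the form $\eta-\mathcal{O}(\eta^2 p)$ --- a positive multiple of $\eta(1-\eta p)$ under $\eta<1/8L(p+4)$ --- while the remainder is a $\tilde\Phi$-decrement plus error terms of orders $\mathcal{O}(\eta\delta^2 p^3)$, $\mathcal{O}(\eta^2\delta^2 p^3)$, and $\mathcal{O}(\eta^2\mu p/\delta^2)$. Summing over the controller iterations $t=0,2,4,\dots$, telescoping (using $\tilde\Phi\geq\obj_{\text{low}}$ and converting $\tilde\Phi\leftrightarrow\tilde\Phi_\delta$ at the endpoints through the $\epsilon_\Phi$ estimate), and dividing by $T$ and by the coefficient $\sim\eta(1-\eta p)$ yields~\eqref{eq:convBound}; the intermediate perturbation iterates $u_{2j+1}=u_{2j}+\delta v_{2j}$ are handled by one additional use of $L$-smoothness, adding only an $\mathcal{O}(\delta^2 p)$ term.

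The main obstacle, I expect, is the handling of the transient error $e_t$: unlike in the model-based setting, $g_t^\delta$ is a \emph{biased} estimator of $\nabla\tilde\Phi_\delta(u_t)$, since the output measured after the perturbation depends on the perturbation itself; one must bound both $\|E_{v_t}[e_t]\|$ and $E_{v_t}[\|e_t\|^2]$ and show that, after the $1/\delta$ scaling, they contribute only the $\mathcal{O}(\mu p\eta/(\delta^2(1-\eta p)))$ term --- which is precisely what creates the trade-off between $\mu$ (plant speed) and $\delta$ (exploration radius) in the final bound. A secondary, purely technical difficulty is carrying all dimension- and smoothing-dependent constants from the Gaussian-smoothing lemmas through the algebra so that the four $\mathcal{O}(\cdot)$ terms emerge with exactly the stated dependence on $p$, $\eta$, $\delta$, $\mu$, and $(1-\eta p)$.
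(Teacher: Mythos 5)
Your proposal is correct and follows essentially the same route as the paper: the same decomposition $g_t^\delta=\tilde g_\delta(u_t)+e_t$ with the bound $E[\|e_t\|^2]\leq 4M_\Phi^2\mu p/\delta^2$ from Assumption~\ref{assum:funUtoY} and $M_\Phi$-Lipschitzness, the same descent-lemma plus Gaussian-smoothing machinery of~\cite{YN-VP:17}, Young/Jensen on the bias cross term, and telescoping over the (even-indexed) controller iterations. The only cosmetic difference is that you apply the descent inequality to $\tilde\Phi$ and absorb the smoothing bias via $\|\nabla\tilde\Phi_\delta-\nabla\tilde\Phi\|$, whereas the paper descends on $\tilde\Phi_\delta$ (where the two-point estimator is unbiased) and converts to $\|\nabla\tilde\Phi(u_k)\|^2$ only at the end through Lemma~\ref{lem:gaussianSmooth}(ii); both yield the same four $\mathcal{O}(\cdot)$ terms.
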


The proof of this result is presented in Section~\ref{sec:analysis}.
In other words, Theorem~\ref{thm:objConvergence} guarantees that the 
control sequence $u_k$, produced by the closed-loop 
system~\eqref{eq:closed-loop}, yields a sequence of gradient errors 
$\|\nabla \tilde \Phi(u_k)\|^2$ that is summable in expectation.
The upper bound in~\eqref{eq:convBound} depends on the various 
parameters of the optimization problem~\eqref{eq:trueProblem}, 
of the plant~\eqref{eq:system}, and of the algorithm; in particular, the first term decreases to zero at a rate $1/T$, the second and third terms can be made arbitrarily small by carefully choosing a suitable small 
$\eta$ and $\delta$, while the fourth term depends on the rate of 
convergence of the plant and can be reduced when $\mu$  is a tunable parameter. In fact, for a rapidly decaying system ($\mu$ close to zero), this term can be neglected.

\begin{remark}
In Theorem~\ref{thm:objConvergence}, the average second moment of the gradient of the $\obj(u)$ is the convergence measure. This measure is extensively used in the field of zeroth-order optimization when the problem is nonconvex~\cite{YN-VP:17,zhang2022new}. We say a solution $u$ is $\epsilon$-accurate if $E[\Vert \nabla \obj(u) \Vert^2] \leq \epsilon$. Note that finding a globally optimal solution for nonconvex problems is NP-hard~\cite{danilova2021recenttheoreticaladvancesnonconvex}. Hence, this measure serves as a starting point for analysis in this work. In addition, we need the Gaussian smooth approximation $\obj_\delta$ to be $\epsilon_\Phi$-close to the original objective function $\obj$, which requires $\delta \leq \sqrt{2\epsilon_\Phi /Lp}$ according to Lemma~\ref{lem:gaussianSmooth} in the Appendix.

\QEDB
\end{remark}

The following result gives an explicit way to select all parameters of the 
optimization method to ensure that the iterates of~\eqref{eq:closed-loop} 
converge to an $\epsilon$-stationary point of~\eqref{eq:unconProblem}.

\begin{theorem}
\label{thm:optimal_parameters_selection}
Suppose that Assumptions~\ref{assum:system}-\ref{assum:costs} hold. 
Fix $\epsilon, \epsilon_\Phi>0$, let $\eta = 1/16L(p+4)$,
    \begin{align}\label{eq:deltasquared}
        \delta^2 = \sqrt{\frac{4M_\Phi^2 \mu p (8p+33)}{L^2 (p+4) \big((p+6)^3 + (p+4)^2 \big)}},
    \end{align}
    and suppose that $\mu \leq \min\{\mu_1, \mu_2\},$ where
    \begin{align}\label{eq_mu1_mu2}
        \mu_1 &= \frac{(p+4)\epsilon^2}{16L^2 M_\Phi^2 p (8p+33)\big((p+6)^3 + (p+4)^2 \big)}, \notag \\
        \mu_2 &= \frac{(p+4)\big((p+6)^3 + (p+4)^2 \big) \epsilon_\Phi^2}{M_\Phi^2 p^3 (8p+33)}.
    \end{align}
Then, after $T \geq  2c_1/\epsilon$ iterations, the closed-loop 
system~\eqref{eq:closed-loop}  satisfies:
    \begin{align}
        \frac{1}{T} \sum_{k=0}^{T-1} E_{v}[\|\nabla \tilde \Phi(u_{k})\|^{2}] \leq \epsilon,
    \end{align}
where $c_1 = 128L(p+4)(\tilde \Phi_\delta(u_0) - \tilde \Phi_{low})$
and the expectation is taken with respect to $\{v_0, \dots , v_{T-1}\}$.
\end{theorem}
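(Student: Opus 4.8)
The result follows from Theorem~\ref{thm:objConvergence}: the plan is to substitute the prescribed values of $\eta$ and $\delta^2$ into the bound~\eqref{eq:convBound} and to show that, term by term, the right-hand side is at most $\epsilon$. First I would check that the chosen parameters are admissible for Theorem~\ref{thm:objConvergence}, i.e., that $\eta < 1/8L(p+4)$ and $\delta \le \sqrt{2\epsilon_\Phi/Lp}$. The former is immediate since $\eta = 1/16L(p+4)$; the latter is exactly the reason the hypothesis $\mu \le \mu_2$ is imposed, since squaring $\delta \le \sqrt{2\epsilon_\Phi/Lp}$ and substituting~\eqref{eq:deltasquared} rearranges, after clearing denominators, into precisely $\mu \le \mu_2$. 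Consequently the Gaussian-smoothing estimate $|\tilde\Phi_\delta(u)-\tilde\Phi(u)|\le\epsilon_\Phi$ of Lemma~\ref{lem:gaussianSmooth} holds, so $\tilde\Phi_\delta(u_0)$ and hence $c_1$ are finite, and $c_1 \ge 0$ because $\tilde\Phi_\delta \ge \tilde\Phi_{low}$ (the Gaussian average of a function bounded below by $\tilde\Phi_{low}$ is itself bounded below by $\tilde\Phi_{low}$); thus the horizon $T \ge 2c_1/\epsilon$ is well-posed.

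Next I would replace the $\mathcal{O}(\cdot)$ symbols in~\eqref{eq:convBound} by the explicit inequality produced in the proof of Theorem~\ref{thm:objConvergence}, whose multiplicative constants are polynomials in $L$ and $M_\Phi$, the leading quantity of the first term being $\tilde\Phi_\delta(u_0)-\tilde\Phi_{low}$. Using $\eta p \le \eta(p+4) = 1/16L$, one bounds $1-\eta p$ below by an absolute constant, which removes all factors $(1-\eta p)^{-1}$ at the price of a fixed multiplier. The first term of~\eqref{eq:convBound} then becomes proportional to $L(p+4)(\tilde\Phi_\delta(u_0)-\tilde\Phi_{low})/T$, so requiring $T \ge 2c_1/\epsilon$ with $c_1 = 128L(p+4)(\tilde\Phi_\delta(u_0)-\tilde\Phi_{low})$ makes this term at most $\epsilon/2$; the numeral $128$ is picked to absorb the $(1-\eta p)^{-1}$ multiplier and to leave half of the budget for the remaining terms.

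It remains to bound the three bias terms, $\mathcal{O}(\eta\delta^2 p^3)$, $\mathcal{O}(\delta^2 p^3)$ and $\mathcal{O}(\mu p\eta/\delta^2)$, by $\epsilon/2$ in aggregate. The key point is that the value of $\delta^2$ in~\eqref{eq:deltasquared} is precisely the one that balances these terms: the first two increase and the third decreases in $\delta^2$, so equating the decreasing term to the sum of the increasing ones gives $\delta^2 \propto \sqrt{\mu}$, and substituting the explicit constants from the proof of Theorem~\ref{thm:objConvergence} (with the $L$- and $M_\Phi$-dependence propagated) yields exactly~\eqref{eq:deltasquared}; the factor $(p+6)^3+(p+4)^2$ collects the $\delta^2$-coefficients of the first two terms, and $M_\Phi^2 p(8p+33)$ the coefficient of the $\mu/\delta^2$ term. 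With $\eta$ and $\delta^2$ now fixed, all three bias terms reduce to a common expression proportional to $\sqrt{\mu}$ times an explicit rational function of $p$ divided by $L$; imposing that this expression be at most $\epsilon/2$ and solving for $\mu$ produces exactly the bound $\mu \le \mu_1$ in~\eqref{eq_mu1_mu2}. Combining with the previous step, $\frac1T\sum_{k=0}^{T-1} E_v[\|\nabla\tilde\Phi(u_k)\|^2] \le \epsilon/2 + \epsilon/2 = \epsilon$.

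I expect the main obstacle to be purely computational: carrying the explicit (non-$\mathcal{O}$) constants through the proof of Theorem~\ref{thm:objConvergence}, and verifying that the somewhat intricate expressions~\eqref{eq:deltasquared}--\eqref{eq_mu1_mu2} are exactly what the balancing and the solve-for-$\mu$ steps produce, down to the powers of $(p+4)$ and $(p+6)$ and the numerals $8$, $33$, $128$. A secondary point deserving care is that $1-\eta p$ be bounded away from $0$ uniformly in $p$; this follows from $\eta p \le 1/16L$ together with the fact that $L$ may be taken bounded below without loss of generality (any valid smoothness constant can always be enlarged).
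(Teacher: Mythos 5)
Your proposal takes essentially the same route as the paper's proof: substitute $\eta = 1/16L(p+4)$ into the explicit (non-big-$\mathcal{O}$) bound from the proof of Theorem~\ref{thm:objConvergence} to get a bound of the form $c_1/T + c_2L^2\delta^2/2 + c_3\mu/\delta^2$, pick $\delta^2$ to balance (i.e.\ minimize over) the two bias terms so that the residual is proportional to $\sqrt{\mu}$, enforce $c_1/T\le\epsilon/2$ and the $\sqrt{\mu}$-residual $\le \epsilon/2$ to obtain $T\ge 2c_1/\epsilon$ and $\mu_1$, and obtain $\mu_2$ from the admissibility condition $\delta^2 Lp/2\le\epsilon_\Phi$ of the Gaussian-smoothing approximation. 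One minor remark: your concern that $1-\eta p$ must be bounded away from zero (and the ``enlarge $L$'' fix) is unnecessary, since with the explicit constants the relevant denominator is $1-8L\eta(p+4)=1/2$ exactly, independently of $L$.
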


The proof of this result is presented in Section~\ref{sec:analysis}.
In other  words, given a desired accuracy $\epsilon,$ 
Theorem~\ref{thm:optimal_parameters_selection} gives a method to select the 
parameters $\eta$ and $\delta$ so that~\eqref{eq:closed-loop} reaches an 
$\epsilon$-stationary point of~\eqref{eq:unconProblem}. Notice the choice of 
$\eta$ and $\delta$ made here are compatible with the range for these parameters 
given in Theorem~\ref{thm:objConvergence}. Particularly, by combining~\eqref{eq:deltasquared} with~\eqref{eq_mu1_mu2}, it 
follows that, to reach an $\epsilon$-stationary point, the smoothing parameter 
should be chosen $\delta^2 = \mc O (\frac{\max \{ \epsilon, \epsilon_\Phi\}}{L^2 p^3})$.

\begin{remark}
    Interestingly, the required iteration complexity for the proposed algorithm is of order $\mathcal{O}(p\epsilon^{-1})$, which is the same as the best complexity result for two-point zeroth-order feedback established in~\cite{YN-VP:17}, whereas the iteration complexity for the one-point residual feedback is of order $\mathcal{O}(p^3 \epsilon^{-3/2})$. Note that this comparison holds when the objective function is smooth and nonconvex~\cite{zhang2022new}.
\QEDB\end{remark}

\section{Convergence and Performance Analysis}
\label{sec:analysis}

In this section, we analyze the iterates of~\eqref{eq:closed-loop} and present 
the proofs of Theorems~\ref{thm:objConvergence}--\ref{thm:optimal_parameters_selection}.

First, we introduce the compact notation:
\begin{align}\label{eq:g_gtilde}
 \tilde g_\delta(u_t) &:= \frac{v_t}{\delta} \big(\tilde \Phi(u_t + \delta v_t) - \tilde \Phi(u_t)\big),\nonumber\\
 g_\delta(u_t) &:=
 \frac{v_t}{\delta} \big( \Phi(u_{t+1},y_{t+2}) - \Phi(u_{t},y_{t+1}) \big),
\end{align}
and observe that $\tilde g_\delta(u_t)$ models the two-point gradient 
estimator~\cite{YN-VP:17,AA-OD-LX:10,OS-17} based for the true function 
$\tilde \Phi(u_k)$ that we aim to minimize  (see~\eqref{eq:unconProblem}). 
Moreover, define the gradient estimator error:
\begin{align} \label{eq:errorNestOurs}
    e_t := g_\delta(u_t) - \tilde g_\delta(u_t).
\end{align}

\subsection{Instrumental results}
The following results are instrumental for the subsequent analysis.

\begin{lemma}[{\!\!{\cite[Lemma 1]{YN-VP:17}}}] \label{lem:nestRand}
    Let $v\in \mathbb{R}^{p}$ satisfy the standard multivariate normal distribution. Then,
    \begin{align*}
        E[\|v\|^t]\leq
        \begin{cases}
            p^{t/2}, & \text{if~} t\in [0,2], \\
            (p+t)^{t/2}, & \text{if~} t> 2.
        \end{cases}
    \end{align*}
\QEDB\end{lemma}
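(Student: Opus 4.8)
The plan is to reduce the statement to a computation of the moments of a chi-squared random variable and then split the argument into two regimes according to whether $x\mapsto x^{t/2}$ is concave or convex. Since $v\sim\mathcal N(0,I_p)$, the quantity $\|v\|^2=\sum_{i=1}^p v_i^2$ is chi-squared with $p$ degrees of freedom, so, writing $M_t:=E[\|v\|^t]=E\big[(\|v\|^2)^{t/2}\big]$, a direct integration (passing to spherical coordinates, or using the chi-squared density together with $\Gamma(z+1)=z\Gamma(z)$) gives the identity $M_t=2^{t/2}\,\Gamma\!\big((p+t)/2\big)/\Gamma(p/2)$; in particular $M_2=p$ and $M_{s+2}=(p+s)\,M_s$ for every $s\ge 0$. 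The recursion $M_{s+2}=(p+s)M_s$ can alternatively be derived without invoking $\Gamma$, by writing $M_{s+2}=p\,E[v_1^2\|v\|^s]$ via symmetry of the coordinates and integrating by parts in the single variable $v_1$, using $\phi'(v_1)=-v_1\phi(v_1)$ for the standard normal density $\phi$.

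For $t\in[0,2]$ the exponent $t/2$ lies in $[0,1]$, so $x\mapsto x^{t/2}$ is concave on $[0,\infty)$; Jensen's inequality then immediately gives $M_t=E\big[(\|v\|^2)^{t/2}\big]\le\big(E[\|v\|^2]\big)^{t/2}=p^{t/2}$, which is the first case of the claim.

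For $t>2$ the map $x\mapsto x^{t/2}$ is convex and Jensen points the wrong way, so here I would instead exploit the recursion. Writing $t=t_0+2k$ with $k\in\integpos$ and $t_0\in(0,2]$ and telescoping $M_{s+2}=(p+s)M_s$ downward to the base exponent $t_0$ yields $M_t=M_{t_0}\prod_{j=0}^{k-1}(p+t_0+2j)$. Each of the $k$ factors satisfies $p+t_0+2j\le p+t$ for $0\le j\le k-1$, while the first case applied to $t_0$ gives $M_{t_0}\le p^{t_0/2}\le(p+t)^{t_0/2}$; multiplying these bounds produces $M_t\le(p+t)^{k}\,(p+t)^{t_0/2}=(p+t)^{t/2}$, since $t/2=k+t_0/2$. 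I expect this $t>2$ regime to be the only part that is not completely routine---in particular, bookkeeping the non-integer base exponent $t_0$ and the number of telescoped factors correctly---whereas the remainder of the proof rests only on the chi-squared moment identity and a single application of Jensen's inequality.
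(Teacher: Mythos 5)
Your proposal is correct. Note, however, that the paper itself offers no proof of this lemma: it is imported verbatim as Lemma~1 of the cited reference [YN-VP:17] (Nesterov--Spokoiny), so the only meaningful comparison is with that reference. Your argument for $t\in[0,2]$ (concavity of $x\mapsto x^{t/2}$ plus Jensen applied to the chi-squared variable $\|v\|^2$ with $E[\|v\|^2]=p$) coincides with the standard one. For $t>2$ you take a genuinely different route: you exploit the exact moment identity $M_t=2^{t/2}\Gamma\big((p+t)/2\big)/\Gamma(p/2)$, equivalently the recursion $M_{s+2}=(p+s)M_s$, and telescope down to a base exponent $t_0\in(0,2]$, bounding each of the $k$ factors by $p+t$ and the base moment by $(p+t)^{t_0/2}$; since $t/2=k+t_0/2$ this gives $M_t\le(p+t)^{t/2}$, and the bookkeeping ($t=t_0+2k$ with $k\in\mathbb{Z}_{>0}$, all factors $p+t_0+2j\le p+t-2$) checks out. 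The cited proof instead avoids Gamma-function recursions by bounding $\|u\|^{t}e^{-\tau\|u\|^2}$ pointwise and optimizing the free parameter $\tau$, which yields the same $(p+t)^{t/2}$ bound after a short calculation; your telescoping argument is arguably more elementary and self-contained (only the chi-squared moment recursion and one application of Jensen), at the mild cost of handling the non-integer residual exponent $t_0$, which you do correctly.
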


\begin{lemma} \label{lem:boundE}
    If Assumptions~\ref{assum:system}-\ref{assum:costs} hold, then
    \begin{align}
        E[\Vert e_t \Vert^2] \leq \frac{4M_\Phi^2\mu p}{\delta^2}.
    \end{align}
\QEDB\end{lemma}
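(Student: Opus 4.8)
The plan is to expand the error term $e_t$ using the definitions in~\eqref{eq:g_gtilde} and~\eqref{eq:errorNestOurs}, and to exploit the fact that $\tilde\Phi(u)=\Phi(u,h(u,d))$ so that the two expressions differ only in the second argument of $\Phi$. Concretely, since $u_{t+1}=u_t+\delta v_t$, one writes
\begin{align*}
e_t = \frac{v_t}{\delta}\Big[\big(\Phi(u_{t+1},y_{t+2}) - \Phi(u_{t+1},h(u_{t+1},d))\big) - \big(\Phi(u_t,y_{t+1}) - \Phi(u_t,h(u_t,d))\big)\Big],
\end{align*}
where the terms $\Phi(u_{t+1},h(u_{t+1},d))=\tilde\Phi(u_{t+1})$ and $\Phi(u_t,h(u_t,d))=\tilde\Phi(u_t)$ are exactly what appears in $\tilde g_\delta(u_t)$. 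The key observation is that the perturbed-input contributions cancel, leaving only the mismatch between the measured output and the steady-state output at each of the two sampling instants.

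Next I would bound each bracketed difference using the $M_\Phi$-Lipschitz continuity of $\Phi$ in $y$ (Assumption~\ref{assum:costs}), obtaining
\begin{align*}
\big|\Phi(u_{t+1},y_{t+2}) - \tilde\Phi(u_{t+1})\big| \leq M_\Phi\,\|y_{t+2}-h(u_{t+1},d)\|, \qquad \big|\Phi(u_{t},y_{t+1}) - \tilde\Phi(u_{t})\big| \leq M_\Phi\,\|y_{t+1}-h(u_{t},d)\|,
\end{align*}
and then invoke Assumption~\ref{assum:funUtoY}, which gives $\|y_{t+1}-h(u_t,d)\|\leq\sqrt{\mu}$ and $\|y_{t+2}-h(u_{t+1},d)\|\leq\sqrt{\mu}$. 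By the triangle inequality the scalar factor in brackets is bounded in absolute value by $2M_\Phi\sqrt{\mu}$, so $\|e_t\|\leq \tfrac{\|v_t\|}{\delta}\,2M_\Phi\sqrt{\mu}$ and hence $\|e_t\|^2 \leq \tfrac{4M_\Phi^2\mu}{\delta^2}\,\|v_t\|^2$.

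Finally, I would take expectation over $v_t\sim\mathcal N(0,I_p)$ and apply Lemma~\ref{lem:nestRand} with $t=2$, which yields $E[\|v_t\|^2]\leq p$, giving $E[\|e_t\|^2]\leq \tfrac{4M_\Phi^2\mu p}{\delta^2}$ as claimed. I do not anticipate a serious obstacle here: the only point requiring care is the telescoping/cancellation step that isolates the output-mismatch terms, after which the bound follows from two applications of Lipschitz continuity, Assumption~\ref{assum:funUtoY}, and the Gaussian moment estimate. \QEDB
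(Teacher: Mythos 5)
Your proposal is correct and follows essentially the same route as the paper: the same regrouping of $e_t$ into the two output-mismatch terms $\Phi(u_{t+1},y_{t+2})-\tilde\Phi(u_{t+1})$ and $\Phi(u_t,y_{t+1})-\tilde\Phi(u_t)$, bounded via the $M_\Phi$-Lipschitz continuity in $y$, Assumption~\ref{assum:funUtoY}, and Lemma~\ref{lem:nestRand}. The only cosmetic difference is that you combine the two terms with the triangle inequality before squaring, while the paper squares first and uses $(a+b)^2\leq 2a^2+2b^2$; both give the same constant $4M_\Phi^2\mu p/\delta^2$.
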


\begin{proof}
By substituting~\eqref{eq:g_gtilde} into~\eqref{eq:errorNestOurs}:
\begin{align}
e_t &=  \frac{v_t}{\delta} \big[\Phi(u_{t+1},y_{t+2}) \nonumber \\
& \quad \quad\quad - \Phi(u_{t},y_{t+1}) - \big(\obj(u_{t+1}) - \obj(u_t)\big)\big].
    \end{align}
    Now, we take the $2$-norm from both sides of the inequality and use $(a+b)^2 \leq 2a^2 + 2b^2$. Noting that $\obj(u_t) = \Phi(u_t,h(u_t,d))$, we obtain
    \begin{align} \label{eq:aux4}
        \|e_t\|^2 &\leq \frac{2\|v_t\|^2}{\delta^2} \big\| \Phi(u_{t+1},y_{t+2}) -  \Phi(u_{t+1},h(u_{t+1},d))\big\|^2 \notag \\
        &+ \frac{2\|v_t\|^2}{\delta^2} \big\| \Phi(u_{t},y_{t+1}) -  \Phi(u_{t},h(u_t,d))\big\|^2, \notag \\
        &\overset{(a)}{\leq} \frac{2\|v_t\|^2}{\delta^2} M_\Phi^2 \big(\Vert y_{t+2} - h(u_{t+1},d)\Vert^2 \notag \\
        &\qquad\qquad\qquad + \Vert y_{t+1} - h(u_{t},d)\Vert^2 \big), \notag \\
        &\overset{(b)}{\leq} \frac{4\|v_t\|^2}{\delta^2} M_\Phi^2\mu,
    \end{align}
    where $(a)$ follows from the Lipschitz continuity of $\Phi(u,y)$ in $y$, and $(b)$ from Assumption~\ref{assum:funUtoY}. Taking the expectations of both sides of~\ref{eq:aux4}, and utilizing Lemma~\ref{lem:nestRand} completes the proof.
\end{proof}

Let $\{ a_k\}_{k=0}^\infty, a_k = 2k,$ denote the sequence of even, 
non-negative, integers. With a slight abuse of notation, in what follows, we 
will denote $a_k$ simply by $k$, such that $k+1$ is intended to denote 
$a_{k+1} = 2(k+1).$ Notice also that we will use the notation $k$ to explicitly 
distinguish it from the time index $t$ in \eqref{eq:closed-loop}.

\subsection{Proof of Theorems~\ref{thm:objConvergence}-\ref{thm:optimal_parameters_selection}}

We begin with the proof of Theorem~\ref{thm:objConvergence}. From 
Assumption~\ref{assum:costs}, we have
    \begin{align} \label{eq:aux}
        \tilde \Phi_\delta&(u_{k+1}) \notag \\
        &\leq \tilde \Phi_\delta(u_{k}) + \big<\nabla \tilde \Phi_\delta(u_{k}), u_{k+1}-u_k \big> + \frac{L}{2} \|u_{k+1}-u_k\|^2 \notag \\
        &\overset{(a)}{\leq} \tilde \Phi_\delta(u_{k}) - \eta \big<\nabla \tilde \Phi_\delta(u_{k}), \tilde g_\delta(u_k)+e_k \big> \notag \\
        &+ \frac{L\eta^2}{2} \|\tilde g_\delta(u_k)+e_k\|^2,
    \end{align}
    where $(a)$ follows from line 5 of Algorithm~\ref{alg:2PTRGF} and \eqref{eq:errorNestOurs}. By taking the expectations of both sides of \eqref{eq:aux} with respect to $v_{k}$ and using $(a+b)^2 \leq 2a^2 + 2b^2$ inequality on the third term, we have
    \begin{align}
        E_{v_k}&[\tilde \Phi_\delta(u_{k+1})] \notag \\
        &\leq \tilde \Phi_\delta(u_{k}) - \eta \big<\nabla \tilde \Phi_\delta(u_{k}), E_{v_k}[\tilde g_\delta(u_k)] \big> \notag \\
        &- \eta \big<\nabla \tilde \Phi_\delta(u_{k}), E_{v_k}[e_k] \big> + L\eta^2 E_{v_k}\big[\|\tilde g_\delta(u_k)\|^2\big] \notag \\
        &+ L\eta^2 E_{v_k}\big[\|e_k\|^2\big] \notag \\
        &\overset{(a)}{\leq} \tilde \Phi_\delta(u_{k}) - \eta \|\nabla \tilde \Phi_\delta(u_{k})\|^2 - \eta \big<\nabla \tilde \Phi_\delta(u_{k}), E_{v_k}[e_k] \big> \notag \\
        &+ L\eta^2 E_{v_k}\big[\|\tilde g_\delta(u_k)\|^2\big] + L\eta^2 E_{v_k}\big[\|e_k\|^2\big] \notag \\
        &\overset{(b)}{\leq} \tilde \Phi_\delta(u_{k}) - \eta \|\nabla \tilde \Phi_\delta(u_{k})\|^2 - \eta \big<\nabla \tilde \Phi_\delta(u_{k}), E_{v_k}[e_k] \big> \notag \\
        & + 4L\eta^2(p+4) \|\nabla \tilde \Phi_\delta(u_k)\|^2 + 3L^3\eta^2\delta^2(p+4)^3 \notag \\
        &+ L\eta^2 E_{v_k}\big[\|e_k\|^2\big]
    \end{align}
    where $(a)$ and $(b)$ follow from Lemma~\ref{lem:gradEstBound}($i$) and ($ii$) in Appendix, respectively. By rearranging terms, we obtain
    \begin{align}
        &\big( \eta - 4L\eta^2(p+4) \big) \|\nabla \tilde \Phi_\delta(u_k)\|^2 \notag \\
        &\leq \tilde \Phi_\delta(u_{k}) -  E_{v_k}[\tilde \Phi_\delta(u_{k+1})] - \eta \big<\nabla \tilde \Phi_\delta(u_{k}), E_{v_k}[e_k] \big> \notag \\
        &+ 3L^3\eta^2\delta^2(p+4)^3 + L\eta^2 E_{v_k}\big[\|e_k\|^2\big] \notag \\
        &\overset{(a)}{\leq} \tilde \Phi_\delta(u_{k}) -  E_{v_k}[\tilde \Phi_\delta(u_{k+1})] + \frac{\eta}{2}\|\nabla \tilde \Phi_\delta(u_k)\|^2 \notag \\
        &+ \frac{\eta}{2}\|E_{v_k}[e_k]\|^2 + 3L^3\eta^2\delta^2(p+4)^3 + L\eta^2 E_{v_k}\big[\|e_k\|^2\big]
    \end{align}
    where $(a)$ follows from $<a,b> \leq \|a\|\|b\| \leq \frac{1}{2} (\|a\|^2 + \|b\|^2)$. Exploiting Jensen's inequality, we have
    \begin{align} \label{eq:aux1}
        &\big( \frac{\eta}{2} - 4L\eta^2(p+4) \big) \|\nabla \tilde \Phi_\delta(u_k)\|^2 \notag \\
        &\leq \tilde \Phi_\delta(u_{k}) -  E_{v_k}[\tilde \Phi_\delta(u_{k+1})]+ 3L^3\eta^2\delta^2(p+4)^3 \notag \\
        &+ (L\eta^2 + \frac{\eta}{2})E_{v_k}\big[\|e_k\|^2\big].
    \end{align}

    Note that the left-hand side of~\eqref{eq:aux1} is positive ($0<\eta<1/8L(p+4)$). Now, we define $\xi := \big( \eta/2 - 4L\eta^2(p+4) \big)$. Combining \eqref{eq:aux1} and Lemma~\ref{lem:gaussianSmooth}($ii$), we have
    \begin{align} \label{eq:aux2}
        &\|\nabla \tilde \Phi(u_k)\|^2 \notag \\
        &\leq \frac{2}{\xi} \big(\tilde \Phi_\delta(u_k) - E_{v_k}[\tilde \Phi_\delta(u_{k+1})]\big) + \frac{6}{\xi}L^3\eta^2\delta^2(p+4)^3 \notag \\
        &+\frac{1}{\xi}(2L\eta^2+\eta)E_{v_k}\big[\|e_k\|^2\big] + \frac{1}{2} \delta^2L^2(p+6)^3.
    \end{align}
    Taking the expectations of both sides of \eqref{eq:aux2} with respect to $v_0, \dots, v_{k-1}$, we have
    \begin{align}
        &E\big[\|\nabla \tilde \Phi(u_k)\|^2 \big] \notag \\
        &\leq \frac{2}{\xi} \big(E[\tilde \Phi_\delta(u_k)] - E[\tilde \Phi_\delta(u_{k+1})]\big) + \frac{6}{\xi}L^3\eta^2\delta^2(p+4)^3 \notag \\
        &+\frac{1}{\xi}(2L\eta^2+\eta)E\big[\|e_k\|^2\big] + \frac{1}{2} \delta^2L^2(p+6)^3.
    \end{align}
    Summing up from $k = 0, \dots, T-1$, we obtain
    \begin{align} \label{eq:aux3}
        &\sum_{k=0}^{T-1} E_{v}[\|\nabla \tilde \Phi(u_{k})\|^{2}] \notag \\
        &\leq \frac{2}{\xi} \big(E[\tilde \Phi_\delta(u_0)] - E[\tilde \Phi_\delta(u_{T})]\big) + \frac{6}{\xi}TL^3\eta^2\delta^2(p+4)^3 \notag \\
        &+\frac{1}{\xi}(2L\eta^2+\eta)\sum_{k=0}^{T-1}E\big[\|e_k\|^2\big] + \frac{1}{2} T\delta^2L^2(p+6)^3 \notag \\
        &\overset{(a)}{\leq} \frac{2}{\xi} \big(\tilde \Phi_\delta(u_0) - \tilde \Phi_{low}\big) + \frac{6}{\xi}TL^3\eta^2\delta^2(p+4)^3 \notag \\
        &+\frac{1}{\xi}(2L\eta^2+\eta)\sum_{k=0}^{T-1}E\big[\|e_k\|^2\big] + \frac{1}{2} T\delta^2L^2(p+6)^3, \notag \\
        &\overset{(b)}{\leq} \frac{2}{\xi} \big(\tilde \Phi_\delta(u_0) - \tilde \Phi_{low}\big) + \frac{6}{\xi}TL^3\eta^2\delta^2(p+4)^3 \notag \\
        &+\frac{4M_\Phi^2\mu pT}{\xi \delta^2}(2L\eta^2+\eta) + \frac{1}{2} T\delta^2L^2(p+6)^3,
    \end{align}
    where $(a)$ follows the fact that $\tilde \Phi$ is bounded from below, and $(b)$ obtained from Lemma~\ref{lem:boundE}. Finally, we divide the both sides of~\eqref{eq:aux3} by $T$, and substitute $\xi$, which lead to
    \begin{align} \label{eq:aux5}
        &\frac{1}{T}\sum_{k=0}^{T-1} E_{v}[\|\nabla \tilde \Phi(u_{k})\|^{2}] \notag \\
        &\leq \frac{4\big(\tilde \Phi_\delta(u_0) - \tilde \Phi_{low}\big)}{T\eta\big(1 - 8L\eta(p+4)\big)} + \frac{12L^3\eta\delta^2(p+4)^3}{1 - 8L\eta(p+4)} \notag \\
        &+ \frac{8M_\Phi^2\mu p(2L\eta +1)}{\delta^2\big(1 - 8L\eta(p+4)\big)} + \frac{\delta^2L^2(p+6)^3}{2} \notag \\
        &\leq \mathcal{O}\bigg(\frac{1}{T\eta(1-\eta p)}\bigg) + \mathcal{O}\bigg(\frac{\eta \delta^2 p^3}{(1-\eta p)}  \bigg) + \mathcal{O}\bigg(\frac{\mu p \eta}{\delta^2 (1-\eta p)}\bigg) \notag \\
        &+ \mathcal{O}\bigg(\delta^2p^3\bigg).
    \end{align}

    Completing the proof of Theorem~\ref{thm:objConvergence}, we continue to prove Theorem~\ref{thm:optimal_parameters_selection}. First, we set $\eta = 1/16L(p+4)$ and substitute it in~\eqref{eq:aux5}. We obtain
    \begin{align} \label{eq:aux66}
        \frac{1}{T}\sum_{k=0}^{T-1} E_{v}[\|\nabla \tilde \Phi(u_{k})\|^{2}] \leq \frac{c_1}{T} + \frac{c_2 L^2 \delta^2}{2} + \frac{c_3 \mu}{\delta^2},
    \end{align}
    where
    \begin{align*}
        c_2 &= (p+6)^3 + (p+4)^2, \notag \\
        c_3 &= \frac{2M_\Phi^2 p(8p+33)}{p+4}.
    \end{align*}
    Then, we minimize the right-hand side of~\eqref{eq:aux66} with respect to $\delta$. Substituting $\delta$ with its optimal value, as given in the theorem statement, we get
    \begin{align}
        \frac{1}{T}\sum_{k=0}^{T-1} E_{v}[\|\nabla \tilde \Phi(u_{k})\|^{2}] \leq \frac{c_1}{T} + L\sqrt{2c_2 c_3 \mu} \leq \epsilon.
    \end{align}
    Solving $c_1/T \leq \epsilon/2$ for $T$ and $L\sqrt{2c_2c_3\mu} \leq \epsilon/2$ for $\mu$ give the lower bound for the number of iterations required to reach $\epsilon$-accuracy convergence and $\mu_1$, respectively. Furthermore, from Lemma~\ref{lem:gaussianSmooth}$(ii)$,
    \begin{align} \label{eq:aux6}
        \vert \obj_\delta(u) - \obj(u) \vert \leq \frac{\delta^2}{2}Lp = \frac{p\sqrt{c_3 \mu}}{\sqrt{2c_2}} \leq \epsilon_\Phi.
    \end{align}
    Solving~\eqref{eq:aux6} for $\mu$ gives $\mu_2$ and completes the proof.

\section{Simulation Results}
\label{sec:simulations}

In this section, we show the performance of the proposed Algorithm~\ref{alg:2PTRGF}.
In line with~\cite{he2023model}, we consider the following 
nonlinear system:
\begin{align} \label{eq:simSystem}
    \begin{split}
        x_{t+1} &= Ax_t + Bu_t + Ed_x \\
        &\qquad\quad+ F\big(x_t-x_{\text{ss}}(u_t,d_x)\big) \otimes \big(x_t-x_{\text{ss}}(u_t,d_x)\big) \\
        y_t &= Cx_t + Dd_y,
    \end{split}
\end{align}
where $x\in \mathbb{R}^{10}$, $u\in \mathbb{R}^{5}$, $d_x,d_y\in \mathbb{R}^5$ and $y\in \mathbb{R}^{5}$ are the state, input, disturbances, and output of the system, accordingly. Moreover, $x_{\text{ss}}(u_t,d_x) \triangleq (I-A)^{-1}(Bu_t+Ed_x)$ is the steady-state map, where $I$ is the identity matrix of the corresponding order. The final term in the state equation of \eqref{eq:simSystem} can be interpreted as the residual error when linear dynamics serve as an approximation for general nonlinear dynamics at steady state. The system matrices in~\eqref{eq:simSystem} are randomly drawn from the standard uniform distribution. Also, we let $\Vert A\Vert_2 = 0.05$ and $\Vert F\Vert_1 = 0.01$ to ensure the stability of the system. The disturbances $d_x, d_y$ are produced from the standard normal distribution. We consider the following instance of~\eqref{eq:trueProblem}:
\begin{align}\label{eq:simObj}
    \min_{u,y} ~ \Phi(u,y)= u^{\top}R_1u + R_2^{\top}u + \|y\|^2.
\end{align}
In~\eqref{eq:simObj}, we define the positive semi-definite matrix $R_1 = R_3^{\top}R_3 \in \real^{5\times 5}$. The elements of $R_2 \in \real^5$ and $R_3 \in \real^{5\times 5}$ are drawn from the standard uniform distribution. Thus, the objective in~\eqref{eq:simObj} is a smooth convex function.

\begin{figure}[t]
    \centering \subfigure[Comparison of the squared norm of the cost gradient for each method.]{\includegraphics[width=1\columnwidth]{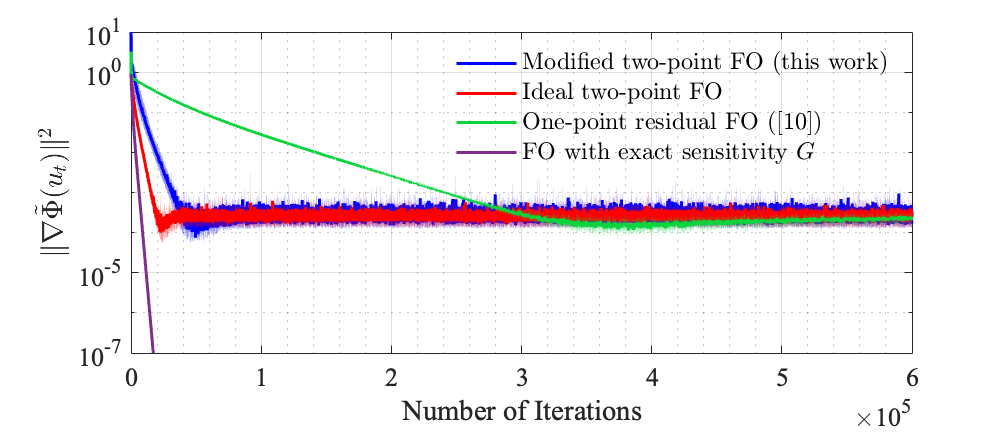}}
    \centering \subfigure[Comparison of the optimality gap for each method.]{\includegraphics[width=1\columnwidth]{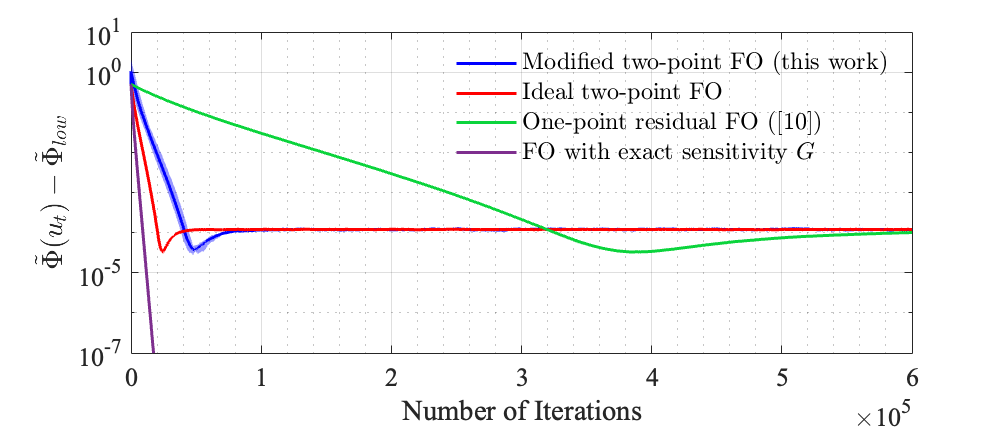}} 
    \caption{Comparison of the proposed modified two-point FO ($\eta = 40\times 10^{-5}$), idealized two-point FO ($\eta = 40\times 10^{-5}$), one-point residual proposed in~\cite{he2023model} ($\eta = 2.5\times 10^{-5}$), and classical first-order gradient descent with the exact gradient of the cost ($\eta = 100\times 10^{-5}$). $\delta$ is set to $5\times 10^{-5}$ for all methods.}
    \vspace{-.5cm}
    \label{fig:compDiffMethod}
\end{figure}

In Fig.~\ref{fig:compDiffMethod}, we propose a comparison 
between the performance of Algorithm~\ref{alg:2PTRGF}, the 
FO method with exact 
gradient~\cite{bianchin2023online}, an  idealized two-point 
method where the state of the plant is restarted at each 
iteration mimicking~\cite{YN-VP:17,AA-OD-LX:10,OS-17}, and
the one-point residual feedback 
optimization~\cite{he2023model}.
More precisely, the FO method with exact 
gradient~\cite{bianchin2023online} is given by:
\begin{align} \label{eq:FFO}
    u_{t+1} = u_t - \eta\big(\nabla_u\Phi(u_t,y_{t+1})+ G^{\top}\nabla_y\Phi(u_t,y_{t+1})\big),
\end{align}
where $G\triangleq C(I-A)^{-1}B$ is the steady-state input 
to output sensitivity of~\eqref{eq:simSystem} and $\eta > 0$ 
is the stepsize.

In Fig.~\ref{fig:compDiffMethod}, we compare the performance of the different methods using the squared norm of the gradient of the objective $\obj(u_t)$ and the optimality gap $\obj(u_t) - \obj_{\text{low}}$, where $\obj_{\text{low}}$ is the minimizer of the problem~\eqref{eq:simObj}. To obtain this figure, we used $\delta = 5 \times 10^{-5}$; the selected stepsizes are $40\times 10^{-5}$, $2.5\times 10^{-5}$, and $100\times 10^{-5}$ for the modified and idealized two-point FO controller, the one-point residual FO controller, and the first-order controller~\eqref{eq:FFO} with the exact gradient of the objective, respectively. The selected stepsizes have been optimized via trial-and-error, selecting the largest values that yield a converging algorithm. In the figures, the solid curve represents the average trajectory across $10$ experiments, whereas the shaded region illustrates the variation in these trajectories. We note that all randomized methods yield moderate oscillatory trajectories, arising from the stochastic nature of input perturbations. In contrast, the exact method~\eqref{eq:FFO}  does not exhibit this behavior because of its deterministic nature.

In Fig.~\ref{fig:compDiffMethod}(a), we observe that the feedback controller using the exact sensitivity $G$ outperforms the others; Algorithm~\ref{alg:2PTRGF} gives a solution accuracy and a convergence rate comparable to the idealized two-point but converges more slowly; the 
one-point residual feedback methods~\cite{he2023model} has a rate a convergence that is considerably worse than all other methods considered. This aligns with the existing theoretical guarantees, which shows that random two-point gradient-free methods generally exhibit faster convergence than random one-point gradient-free methods~\cite{zhang2022new}. A similar pattern is observed in Fig.~\ref{fig:compDiffMethod}(b); the first-order controller achieves the best performance, and the proposed modified controller convergence rate is close to that of the idealized two-point controller and surpasses the one-point residual controller. Moreover, the feedback mechanism of these controllers inherently reduces spikes caused by disturbances.

\begin{figure}[t]
    \centering 
    \includegraphics[width=1\columnwidth]{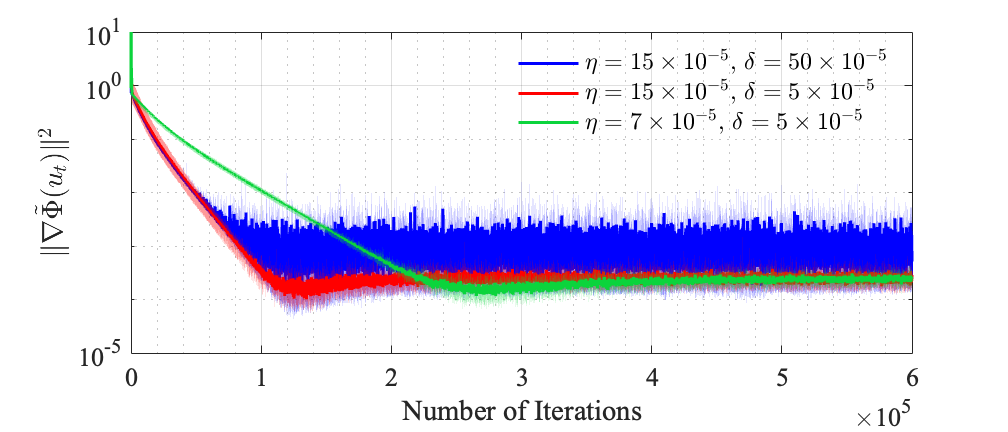}
    \caption{Performance of the proposed modified two-point FO for different stepsize ($\eta$) and smoothing parameter ($\delta$).}
    \label{fig:compDiffEtaDelta}
    \vspace{-0.5cm}
\end{figure}

Fig.~\ref{fig:compDiffEtaDelta} illustrates the impact of the 
stepsize ($\eta$) and the smoothing parameter ($\delta$). 
Decreasing the smoothing ratio $\delta$, in general, leads to 
an improvement in steady-state accuracy and reduces high-frequency variations due to large exploration steps (cf. Fig.~\ref{fig:compDiffEtaDelta} blue line and red line). However, in our simulations, we observed that exceedingly small values of $\delta$ may lead to algorithms with poor robustness when noise is involved in the sensing and actuation signals, as noise may interfere with the exploration step (step 2 of Algorithm~\ref{alg:2PTRGF}). On the other hand, increasing the stepsize ($\eta$) within the allowable range improves the rate of convergence (cf. Fig.~\ref{fig:compDiffEtaDelta} green line and red line).

\section{Conclusions} \label{sec:conclusions}

This paper introduced a two-point gradient-free feedback optimization method for controlling dynamical systems to an optimal steady-state point. Unlike traditional FO methods that rely on model-dependent gradient estimates, the proposed algorithm estimates the gradient using function evaluations, which makes it fully model-free. We modified the standard two-point zeroth-order method to make it practical for control systems. Theoretical analysis provided guarantees on the convergence of the method, and we compared the proposed algorithm with other existing zeroth-order algorithms to show its effectiveness. Further studies could extend the algorithm to a constrained optimization problem and explore adaptive stepsize to enhance the performance. In addition, relaxing the assumption on contraction properties would be an interesting research direction.

\appendix
\subsection{Zeroth-Order Optimization}

Zeroth-order or gradient-free optimization is the concept of utilizing function evaluations to estimate the gradient of the cost without the need to access the gradient directly. Zeroth-order optimization is a well-studied field; as such, the seminal work of~\cite{YN-VP:17} sets a concrete framework in this field, introducing two-point gradient estimates, which we utilize in this work. The goal is to approximate the first-order gradient of a function using only function evaluations. To achieve this, we need to perturb the function around the current point in a uniformly distributed manner across all directions that leads to considering a Gaussian-smoothed version of the function $f(u): \real^p \to \real$, as introduced by~\cite{YN-VP:17},
\begin{align} \label{eq:gaussianApprox}
    f_\delta(u) := E_{v \sim \mathcal{N}(0, I_p)}[f(u+\delta v)],
\end{align}
where the elements of the vector $v$ are i.i.d. standard Gaussian random variables. The following Lemma bounds the approximation error of the function $f_\delta(u)$, which is developed in~\cite{YN-VP:17}.
\begin{lemma}[\!\!{\cite[Theorem 1 and Lemma 4]{YN-VP:17}}] \label{lem:gaussianSmooth}
    If $f: \real^p \to \real$ is $L$-smooth, then for any $u \in \real^p$, $\delta > 0$, and $f_\delta(u)$ given in~\eqref{eq:gaussianApprox}, 
    \begin{align*}
        (i)\quad &\vert f_\delta(u) - f(u) \vert \leq \frac{\delta^2}{2}Lp, \nonumber \\
        (ii)\quad &\Vert \nabla f(u) \Vert^2 \leq 2\Vert \nabla f_\delta(u)  \Vert^2 + \frac{\delta^2}{2}L^2(p+6)^3.
    \end{align*}
    \QEDB
\end{lemma}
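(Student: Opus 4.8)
\emph{Proof plan for Lemma~\ref{lem:gaussianSmooth}.} Since this lemma restates \cite[Theorem 1 and Lemma 4]{YN-VP:17}, the plan is to reconstruct the Nesterov--Spokoiny argument; it rests entirely on $L$-smoothness of $f$ together with the Gaussian moment bounds of Lemma~\ref{lem:nestRand}. For part $(i)$, I would write $f_\delta(u) - f(u) = E_v[f(u+\delta v) - f(u)]$ and use the quadratic estimate implied by $L$-smoothness, namely $\lvert f(u+\delta v) - f(u) - \langle \nabla f(u), \delta v\rangle\rvert \le \tfrac{L}{2}\delta^2\|v\|^2$. Taking the expectation, the term $E_v[\langle \nabla f(u), \delta v\rangle]$ vanishes because $E[v] = 0$, leaving $\lvert f_\delta(u) - f(u)\rvert \le \tfrac{L}{2}\delta^2 E[\|v\|^2] = \tfrac{L}{2}\delta^2 p$, the last step by Lemma~\ref{lem:nestRand} with $t = 2$.

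For part $(ii)$ the idea is to control $\|\nabla f_\delta(u) - \nabla f(u)\|$ and then close with the elementary inequality $\|\nabla f(u)\|^2 \le 2\|\nabla f_\delta(u)\|^2 + 2\|\nabla f_\delta(u) - \nabla f(u)\|^2$. First I would establish the integration-by-parts (Stein-type) representation $\nabla f_\delta(u) = \tfrac{1}{\delta}E_v[(f(u+\delta v) - f(u))\,v]$, obtained by differentiating $f_\delta$ under the integral and integrating by parts against the standard Gaussian density, where subtracting the constant $f(u)$ is harmless since $E[v] = 0$. Combining this with the identity $\nabla f(u) = \tfrac{1}{\delta}E_v[\langle \nabla f(u), \delta v\rangle\,v]$ (valid because $E[vv^\top] = I$) yields $\nabla f_\delta(u) - \nabla f(u) = \tfrac{1}{\delta}E_v\big[(f(u+\delta v) - f(u) - \langle \nabla f(u), \delta v\rangle)\,v\big]$. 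Bounding the scalar factor by $\tfrac{L}{2}\delta^2\|v\|^2$, moving the norm inside the expectation, and applying Lemma~\ref{lem:nestRand} with $t = 3$ to $E[\|v\|^3]$ gives $\|\nabla f_\delta(u) - \nabla f(u)\| \le \tfrac{L\delta}{2}(p+3)^{3/2}$; squaring and substituting then produces a bound of the form $2\|\nabla f_\delta(u)\|^2 + \tfrac{\delta^2}{2}L^2(p+c)^3$, and tracking the moment constants (or using a slightly looser moment estimate) gives $c = 6$ as stated.

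The only genuinely delicate step is justifying the differentiation-under-the-integral and integration-by-parts operations that produce the Stein representation of $\nabla f_\delta$; this requires an integrability/growth control of $f$ and $\nabla f$ against the fast-decaying Gaussian tails, which is exactly where $L$-smoothness (hence at most linear growth of $\nabla f$) is invoked. Once that representation is in hand, the remainder is routine manipulation of first- and second-order Taylor remainders together with Gaussian moments. For a fully self-contained treatment one could alternatively cite \cite{YN-VP:17} verbatim.
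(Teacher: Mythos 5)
Your proposal is correct and follows essentially the same route as the paper, which proves nothing itself and simply invokes \cite[Theorem 1 and Lemma 4]{YN-VP:17}: part $(i)$ via the second-order Taylor bound plus $E[v]=0$ and $E[\|v\|^2]=p$, and part $(ii)$ via the Stein/integration-by-parts representation of $\nabla f_\delta$, the identity $E[vv^\tsp]=I$, the cubic Gaussian moment bound, and the inequality $\|a+b\|^2\le 2\|a\|^2+2\|b\|^2$. Your computation in fact yields the slightly sharper constant $(p+3)^3$, which implies the stated $(p+6)^3$, so no gap remains.
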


For an objective function $f(u): \real^p \to \real$, the gradient-free oracle proposed in~\cite{YN-VP:17} is
\begin{align} \label{eq:2ptGradEst}
    \tilde g_{\delta}(u_t) = \frac{v_t}{\delta} \big( f(u_t+\delta v_t) - f(u_t) \big)
\end{align}
where $v_t$ is a random vector of the corresponding size drawn from the standard multivariate normal distribution, and $\delta$ is a smoothing parameter, which represents the amplitude of the exploration noise. Note that the gradient estimation in \eqref{eq:2ptGradEst} requires two function evaluations at time $t$, which poses a challenge in the control setting, as only one actuation step can be applied to the system at a given time. In this work, we modify the algorithm to make it compatible with the FO setting.

\begin{lemma}[\!\!{\cite[Lemma 5]{YN-VP:17}}] \label{lem:gradEstBound}
If $f: \real^p \to \real$ is $L$-smooth, given any $u \in \real^p$, $\delta > 0$, $v\sim \mathcal{N}(0,I_p)$, $f_\delta(u)$ in~\eqref{eq:gaussianApprox}, and $\tilde g_\delta (u)$ in~\eqref{eq:2ptGradEst},
    \begin{align*}
        (i)\quad &E_v[\tilde g_\delta (u)] = \nabla f_\delta (u), \nonumber \\
        (ii)\quad &E_v[\Vert \tilde g_\delta (u) \Vert^2] \leq 4(p+4) \Vert \nabla f_\delta (u) \Vert^2 + 3\delta^2L^2(p+4)^3.
    \end{align*}
\QEDB\end{lemma}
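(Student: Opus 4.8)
The plan is to prove the two parts of Lemma~\ref{lem:gradEstBound} independently: part $(i)$ by a Gaussian integration-by-parts identity, and part $(ii)$ by combining $L$-smoothness with moment estimates for the standard Gaussian vector $v$. For part $(i)$, I would start from the definition $\tilde g_\delta(u) = \frac{v}{\delta}\big(f(u+\delta v) - f(u)\big)$ in~\eqref{eq:2ptGradEst} and split the expectation. The $f(u)$ piece drops out because $E_v[v]=0$, leaving $E_v[\tilde g_\delta(u)] = \frac{1}{\delta}E_v[v\,f(u+\delta v)]$. The key step is Stein's identity $E_v[v\,\phi(v)] = E_v[\nabla_v\phi(v)]$ applied with $\phi(v)=f(u+\delta v)$, whose gradient in $v$ is $\delta\nabla f(u+\delta v)$; this gives $E_v[v\,f(u+\delta v)] = \delta\,E_v[\nabla f(u+\delta v)]$. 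Since differentiating~\eqref{eq:gaussianApprox} under the integral sign (justified by $L$-smoothness and dominated convergence) yields $\nabla f_\delta(u)=E_v[\nabla f(u+\delta v)]$, I conclude $E_v[\tilde g_\delta(u)]=\nabla f_\delta(u)$. Equivalently, one may change variables $w=u+\delta v$ in~\eqref{eq:gaussianApprox} and differentiate the resulting Gaussian-weighted integral directly.

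For part $(ii)$, I would write $\|\tilde g_\delta(u)\|^2 = \frac{\|v\|^2}{\delta^2}\big(f(u+\delta v)-f(u)\big)^2$ and expand the finite difference via $L$-smoothness as $f(u+\delta v)-f(u)=\delta\langle\nabla f(u),v\rangle + R(v)$, with remainder $|R(v)|\le \frac{L\delta^2}{2}\|v\|^2$. Applying $(a+b)^2\le 2a^2+2b^2$ separates a linear contribution $2\|v\|^2\langle\nabla f(u),v\rangle^2$ from a curvature contribution $\tfrac{L^2\delta^2}{2}\|v\|^6$. The two Gaussian moments needed are $E_v[\|v\|^2\langle\nabla f(u),v\rangle^2]$, which by rotational invariance and the fourth-moment identity of a standard Gaussian equals $(p+2)\|\nabla f(u)\|^2\le (p+4)\|\nabla f(u)\|^2$, and $E_v[\|v\|^6]\le (p+6)^3$ from Lemma~\ref{lem:nestRand} with $t=6$. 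Collecting these produces the standard intermediate estimate $E_v[\|\tilde g_\delta(u)\|^2]\le 2(p+4)\|\nabla f(u)\|^2 + \tfrac{\delta^2}{2}L^2(p+6)^3$, i.e.\ the bound expressed in terms of the true gradient.

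The final step, and the main obstacle, is to pass from $\nabla f(u)$ to the smoothed gradient $\nabla f_\delta(u)$ that appears in the statement while retaining a tight remainder. Invoking Lemma~\ref{lem:gaussianSmooth}$(ii)$, $\|\nabla f(u)\|^2\le 2\|\nabla f_\delta(u)\|^2 + \tfrac{\delta^2}{2}L^2(p+6)^3$, immediately converts the coefficient $2(p+4)$ into the advertised $4(p+4)\|\nabla f_\delta(u)\|^2$. The delicate part is the curvature term: a naive substitution inflates it by a factor of order $p$, so reaching the stated tight form $3\delta^2 L^2(p+4)^3$ requires the more careful accounting of~\cite[Lemma 5]{YN-VP:17}, which tracks the smoothing bias and the Gaussian moments simultaneously rather than decoupling them. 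I expect the bookkeeping of these polynomial-in-$p$ constants — not any conceptual difficulty — to be the crux, since both the unbiasedness identity and the moment computations are otherwise routine.
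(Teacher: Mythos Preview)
The paper does not supply its own proof of this lemma; it simply quotes~\cite[Lemma~5]{YN-VP:17}, so there is no in-paper argument to compare against. Your sketch is precisely the standard route of that reference---Stein's identity for part~$(i)$, and the $L$-smoothness remainder plus the Gaussian moment bounds of Lemma~\ref{lem:nestRand} for part~$(ii)$---and you have correctly isolated the only delicate point, namely that naively substituting Lemma~\ref{lem:gaussianSmooth}$(ii)$ into the intermediate estimate $2(p+4)\|\nabla f(u)\|^2+\tfrac{\delta^2}{2}L^2(p+6)^3$ inflates the curvature term by an extra factor of order $p$, so the quoted constant $3\delta^2L^2(p+4)^3$ indeed relies on the sharper bookkeeping carried out in~\cite{YN-VP:17}.
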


Lemma~\ref{lem:gradEstBound} shows that the estimator~\eqref{eq:2ptGradEst} is an unbiased gradient estimate of the smoothed function $f_\delta(u)$ at $u_t$ besides a bound on the second moment of the gradient estimate $E_v[\Vert \tilde g_\delta (u) \Vert^2]$, which is used in our analysis in Section~\ref{sec:analysis}.

\bibliographystyle{IEEEtran}
\bibliography{REFs/alias,REFs/full_GB,REFs/GB,REFs/AM_bibliography}

\addtolength{\textheight}{-12cm}   

\end{document}